\title{{\sc Number of Eigenvalues of Non-self-adjoint Schr\"{o}dinger Operators with Dilation Analytic Complex Potentials}}
\author{Norihiro Someyama
\footnote{Department of Mathematics, Gakushuin University, 1-5-1 Mejiro, Toshima-ku, Tokyo 171-8588, Japan / e-mail: philomatics@outlook.jp}}
\date{\empty}
\newtheorem{thm}{Theorem}[section]
\newtheorem{prop}{Proposition}[section]
\newtheorem{cor}{Corollary}[section]
\newtheorem{lem}{Lemma}[section]
\theoremstyle{definition}
\newtheorem{assump}{Assumption}[section]
\newtheorem{rem}{Remark}[section]
\newtheorem{ex}{Example\!\!}
\theoremstyle{theorem}
\begin{document}
\maketitle

\begin{abstract}
In the present paper, we give Lieb-Thirring type inequalities for all isolated eigenvalues of $d$-dimensional non-self-adjoint Schr\"{o}dinger operators with complex-valued and dilation analytic potentials.
In order to derive them, we prove that isolated eigenvalues and their multiplicities are invariant under complex dilation.
\end{abstract}
\vspace{3mm}

{\small 
{\bf Keywords}:
non-self-adjoint Schr\"{o}dinger operator, Lieb-Thirring inequality, complex potential, dilation analytic potential, isolated eigenvalue
}

\section{Introduction}
In the present paper, we consider Schr\"{o}dinger operators on ${\mathbb R}^d$,
\[
H:=H_0+V,\quad \ H_0:=-\Delta=-\sum_{j=1}^d\frac{\partial^2}{\partial x_j^2}
\]
with complex potentials $V$ which are dilation analytic (see Assumption \ref{assump:CDA} for definition), where the domain ${\cal D}(H_0)$ of $H_0$ is the second order Sobolev space in ${\mathbb R}^d$, $H^2({\mathbb R}^d)$.
We denote the real and the imaginary parts of $z\in \mathbb C$ by $\Re z$ and $\Im z$ respectively.
We prove that the sum of moments of isolated eigenvalues of $H$ in the half-plane can be estimated by the Lieb-Thirring type inequalities
\begin{equation}
\label{eq:Sineq}
\sum_{\lambda\in \sigma_{{\rm d}}(H)\,\cap\,[{\mathbb C}^{\pm}\cup\,(-\infty,0)]}|\lambda|^{\gamma}
\le C_{\gamma,d}\int_{{\mathbb R}^d}|V_{\pm \pi {\rm i}/4}(x)|^{\gamma+d/2}\,{\rm d}x
\end{equation}
for $d,\gamma\ge 1$, where ${\mathbb C}^{\pm}:=\{z\in \mathbb C:\pm \Im z>0\}$ and $V_{\theta}(x):=V({\rm e}^{\theta}x)$.
Here and hereafter, the formulas which contain $\pm$ represent two formulas, one for the upper sign and the other for the lower sign. 
The space of $L^p$-functions from the space $X$ to the space $Y$ will be denoted by $L^p(X;Y)$, in particular we denote $L^p(X;X)=L^p(X)$.

In addition, many researches on the number of eigenvalues of Schr\"{o}dinger operators with complex potentials have been done.
For instance, Frank, Laptev and Safronov(\cite{FLS}) recently derived the following result for the number $N$ of isolated eigenvalues of the {\it odd} dimensional Schr\"{o}dinger operator $H$ with the complex potential which decays exponentially fast:
\[
N\le \frac{C_d}{\varepsilon^2}\left(\int_{\mathbb R^d}{\rm e}^{\varepsilon |x|}|V(x)|^{(d+1)/2}\,{\rm d}x\right)^2
\]
for any $\varepsilon>0$, where $C_d$ is a constant depending only on $d=3,5,7,\ldots$.
We also give an inequality for the number of {\it all} isolated eigenvalues of {\it any} dimensional Schr\"{o}dinger operator $H$ with the special dilation analytic complex potential as a corollary of (\ref{eq:Sineq}).

\subsection{Lieb-Thirring Inequalities for Real Potentials}
We recall the standard Lieb-Thirring inequality.
We consider the self-adjoint Schr\"{o}dinger operator $H=H_0+V$ in $L^2({\mathbb R}^d)$ defined by the closure of the quadratic form 
\[
q_H(u):=\int_{{\mathbb R}^d}|\nabla u(x)|^2\,{\rm d}x+\int_{{\mathbb R}^d}V(x)|u(x)|^2\,{\rm d}x,\quad u\in C_0^{\infty}({\mathbb R}^d)
\]
where $\nabla:=(\partial_{x_j})_{1\le j\le d}$ with the derivative $\partial_{x_j}$ in the sense of distibutions. 
If $V\in L^{\gamma+d/2}({\mathbb R}^d;\mathbb R)$ and
\begin{equation}
\lambda_1\le \lambda_2\le \cdots \le \lambda_n\le \cdots<0
\label{eq:neglam}
\end{equation}
are negative eigenvalues of $H$, {\it the standard Lieb-Thirring inequality}:
\begin{equation}
\sum_{n=1}^{\infty}|\lambda_n|^{\gamma}\le L_{\gamma,d}\int_{{\mathbb R}^d}V_{-}(x)^{\gamma+d/2}\,{\rm d}x,\quad \ V_{\pm}:=\max\{\pm V,0\}
\label{eq:sLT}
\end{equation}
is well known(e.g., \cite{AL}, \cite{LW1}, \cite{LW2}, \cite{LL}, \cite{LS} and \cite{LT}), here $L_{\gamma,d}$ is the {\it sharp constant} and $\gamma$ satisfies
\begin{equation}
\left.
\begin{array}{ll}
\displaystyle \gamma \ge \frac{1}{2},&\ d=1\vspace{1.5mm}\\
\gamma>0,&\ d=2\vspace{1mm}\\
\gamma\ge 0,&\ d\ge 3
\end{array}
\right\}.
\label{eq:gammad}
\end{equation}
It is important that $L_{\gamma,d}$ do not depend on $V$.
If $\gamma=0$, the left-hand side of (\ref{eq:sLT}) is the number of negative eigenvalues (\ref{eq:neglam}) of $H$.
For the constant $L_{\gamma,d}$, it is well known that 
\[
L_{\gamma,d}^{{\rm cl}}\le L_{\gamma,d}
\]
for all $\gamma\ge 0$ and $d\ge 1$, where $L_{\gamma,d}^{{\rm cl}}$ is the classical constant:
\[
L_{\gamma,d}^{{\rm cl}}:=(2\pi)^{-d}\int_{{\mathbb R}^d}(|\xi|-1)_{-}^{\gamma}\,{\rm d}\xi=\frac{\Gamma(\gamma+1)}{2^d\pi^{d/2}\Gamma(\gamma+d/2+1)}.
\]
In fact, it is proven that 
\begin{equation}
L_{\gamma,d}=L_{\gamma,d}^{{\rm cl}}
\label{eq:LLcl}
\end{equation}
for all $d\ge 1$ if $\gamma \ge 3/2$ (cf. \cite{LT},\cite{AL},\cite{LW2}), and 
\begin{equation}
L_{\gamma,d}\le \frac{\pi}{\sqrt{3}}L_{\gamma,d}^{{\rm cl}}=(1.814\cdots)\times L_{\gamma,d}^{{\rm cl}}
\label{eq:iLc}
\end{equation}
for all $d,\gamma \ge 1$ (\cite{DLL}).
Frank, Hundertmark, Jex and Nam(\cite{FHJN}) however improved (\ref{eq:iLc}) to
\[
L_{\gamma,d}\le 1.456\times L_{\gamma,d}^{{\rm cl}}
\]
if $\gamma=1$.
On the other hand, Helffer and Robert(\cite{HR}) proved that 
\[
L_{\gamma,d}^{{\rm cl}}<L_{\gamma,d}
\]
if 
\[
\left.
\begin{array}{ll}
\displaystyle \frac{1}{2}\le \gamma<\frac{3}{2}, & d=1 \vspace{1mm}\\
\gamma<1, & d\ge 2
\end{array}
\right\}.
\]
For further information about $L_{\gamma,d}$ and $L_{\gamma,d}^{\rm cl}$, see Remarks of Theorem 12.4 of \cite{LL}.

Lieb-Thirring inequalities are key ingredients in the proof of the stability of matter by Lieb and Thirring(e.g., \cite{LS}), and is used for obtaining the efficient lower bound for the energies of fermions.

\subsection{Lieb-Thirring Inequalities for Complex Potentials}
The Lieb-Thirring inequality (\ref{eq:sLT}) has recently been extended to Schr\"{o}dinger operators with complex potentials $V\in L^{\gamma+d/2}({\mathbb R}^d;\mathbb C)$ for $d,\gamma\ge 1$.

We suppose that $V$ is a complex-valued potential which is $H_0$-compact.
Then
\[
H=H_0+V,\quad {\cal D}(H)={\cal D}(H_0)
\]
is quasi-maximal accretive(\cite{K}), and the spectrum $\sigma(H)$ of $H$ consists of the essential spectrum $\sigma_{{\rm ess}}(H)=[0,\infty)$ and the discrete spectrum $\sigma_{{\rm d}}(H)$ which consists of isolated eigenvalues of $H$ of finite algebraic multiplicities:
\[
m_{\lambda}:=\sup_{k\in \mathbb N}\left(\dim \ker[(H-\lambda)^k]\right).
\]
This can be seen via analytic Fredholm theory(\cite{RS1}) applied to 
\[
(H-z)^{-1}=(H_0-z)^{-1}(1+V(H_0-z)^{-1})^{-1},
\]
because
\begin{itemize}
\item $F(z):=V(H_0-z)^{-1}$ is an analytic function of $z\in \mathbb C\setminus [0,\infty)$ with values in the space of compact operators, and
\item $(1-F(z))^{-1}$ exists for $z\in \mathbb C\setminus [0,\infty)$ with sufficiently large $|\Im z|$.
\end{itemize}
Since $H$ is non-self-adjoint, $m_{\lambda}$ is in general different from 
the geometric multiplicity defined by
\[
g_{\lambda}:=\dim \{u\in L^2({\mathbb R}^d):(H-\lambda)u=0\},
\]
and we count the number of eigenvalues according to their algebraic 
multiplicities. 

Then, the following estimates for the sum of moments of eigenvalues of $H$ {\it outside the cone ${\cal C}_{\kappa}:=\{z\in \mathbb C:|\Im z|<\kappa \Re z\}$} for {\it any positive} constant $\kappa$ are proven:

\begin{thm}[Frank, Laptev, Lieb and Seiringer; \cite{FLLS}]
\label{thm:FLLS}
Let $d\ge 1$ and $\gamma\ge 1$. 
Suppose that $V\in L^{\gamma+d/2}({\mathbb R}^d;{\mathbb C})$.
Then, for any $\kappa>0$, 
\begin{equation}
\sum_{\sigma_{{\rm d}}(H)\,\cap\,{\cal C}_{\kappa}^{\rm c}}|\lambda|^{\gamma}\le C_{\gamma,d}\left(1+\frac{2}{\kappa}\right)^{\gamma+d/2}\int_{{\mathbb R}^d}|V(x)|^{\gamma+d/2}\,{\rm d}x
\label{eq:FLLS}
\end{equation}
where ${\cal C}_{\kappa}^{\rm c}$ is the complement set of ${\cal C}_{\kappa}$ and 
\begin{equation}
C_{\gamma,d}:=2^{1+\gamma/2+d/4}L_{\gamma,d}
\label{eq:Cgammad}
\end{equation}
and $L_{\gamma,d}$ is the constant of (\ref{eq:sLT}). In particular, 
\begin{equation}
\sum_{\sigma_{{\rm d}}(H)\,\cap\,\{\Re z\le 0\}}|\lambda|^{\gamma}
\le C_{\gamma,d}\int_{{\mathbb R}^d}|V(x)|^{\gamma+d/2}\,{\rm d}x.
\label{eq:momegainfi}
\end{equation}
\end{thm}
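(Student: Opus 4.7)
\quad
Following \cite{FLLS}, I would combine three ingredients: the Birman--Schwinger principle, the Kato--Seiler--Simon Schatten bound, and a Jensen-type estimate for holomorphic functions on the slit plane $\mathbb{C}\setminus[0,\infty)$. The first two produce a holomorphic function on $\mathbb{C}\setminus[0,\infty)$ whose zeros are exactly the isolated eigenvalues of $H$, with growth explicitly controlled by $\|V\|_{L^{\gamma+d/2}}$; the third converts this growth bound into the desired moment inequality.

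Using the polar decomposition $V=U|V|$, set $A:=U|V|^{1/2}$ and $B:=|V|^{1/2}$, so that $V=AB$. Since $V$ is $H_{0}$-compact, the Birman--Schwinger principle identifies each $\lambda\in\sigma_{{\rm d}}(H)$ with a zero of the regularised Fredholm determinant
\[
D(\zeta):=\det\nolimits_{\lceil p\rceil}\!\bigl(I+B(H_{0}-\zeta)^{-1}A\bigr),\qquad \zeta\in\mathbb{C}\setminus[0,\infty),
\]
the algebraic multiplicities being preserved. Splitting $B(H_{0}-\zeta)^{-1}A=[B(H_{0}-\zeta)^{-1/2}]\cdot[(H_{0}-\zeta)^{-1/2}A]$ and applying Kato--Seiler--Simon to each factor (which is of the form $f(X)g(-{\rm i}\nabla)$), one obtains for $p:=\gamma+d/2\ge 3/2$
\[
\bigl\|B(H_{0}-\zeta)^{-1}A\bigr\|_{{\mathfrak S}^{p}}^{\,p}
\le (2\pi)^{-d}\,\|V\|_{L^{\gamma+d/2}}^{\gamma+d/2}\int_{\mathbb{R}^{d}}\frac{d\xi}{\bigl||\xi|^{2}-\zeta\bigr|^{p}} ,
\]
and a scaling argument bounds the $\xi$-integral by $C_{\gamma,d}\,\mathrm{dist}(\zeta,[0,\infty))^{d/2-p}$. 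On the sector $\{\,|\Im\zeta|\ge\kappa\Re\zeta\,\}$ one has the elementary geometric inequality $\mathrm{dist}(\zeta,[0,\infty))\ge (1+2/\kappa)^{-1}|\zeta|$, which is the source of the explicit $\kappa$-dependence appearing in~(\ref{eq:FLLS}).

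A Jensen-type inequality on the slit plane (obtained, for instance, by conformal mapping $\mathbb{C}\setminus[0,\infty)\to\mathbb{D}$ together with a weighted Blaschke-product construction, as in the Borichev--Golinskii--Kupin framework) then converts the above bound on $\log|D(\zeta)|$ into
\[
\sum_{\lambda\in\sigma_{{\rm d}}(H),\ |\Im\lambda|\ge\kappa\Re\lambda}|\lambda|^{\gamma}
\le C_{\gamma,d}\bigl(1+\tfrac{2}{\kappa}\bigr)^{\gamma+d/2}\int_{\mathbb{R}^{d}}|V(x)|^{\gamma+d/2}\,dx ,
\]
which is~(\ref{eq:FLLS}). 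The half-plane bound~(\ref{eq:momegainfi}) follows by letting $\kappa\to\infty$: the prefactor tends to $1$ and the summation region increases to $\{\,\lambda:\Re\lambda\le 0\,\}$. The principal obstacle, and the step genuinely distinct from the self-adjoint theory, is this final function-theoretic estimate: one must construct a weighted Jensen inequality on the slit plane whose weights match the growth of $D(\zeta)$ and deliver the sharp exponent $\gamma+d/2$ together with the precise $\kappa$-factor in~(\ref{eq:FLLS}).
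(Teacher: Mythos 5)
The paper does not prove Theorem~\ref{thm:FLLS} at all: it is quoted directly from \cite{FLLS} and then used as a black box in the proof of Theorem~\ref{thm:EEonC}, so there is no ``paper's own proof'' to compare with. The relevant comparison is therefore with the actual argument of \cite{FLLS}, and your proposal does not follow it.

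The route you sketch---Birman--Schwinger determinant $D(\zeta)=\det_{\lceil p\rceil}(I+B(H_0-\zeta)^{-1}A)$, Kato--Seiler--Simon Schatten bounds after the $f(X)g(-{\rm i}\nabla)$ splitting, and a weighted Jensen / Borichev--Golinskii--Kupin estimate on the slit plane---is the method of Demuth--Hansmann--Katriel \cite{DHK3} (and later Frank--Sabin \cite{FS}), not of \cite{FLLS}. The FLLS proof is considerably more elementary and uses no regularised determinants and no complex function theory: starting from the quadratic-form identities $\Re\lambda=\|\nabla\psi\|^2+\Re\int V|\psi|^2$ and $\Im\lambda=\Im\int V|\psi|^2$ for a normalised eigenfunction $\psi$, one shows that an eigenvalue $\lambda$ lying outside the cone $\{|\Im z|<\kappa\Re z\}$ forces a self-adjoint comparison operator of the form $-\Delta-a|V|$ (with $a$ depending on $\kappa$) to have spectrum at or below $-c|\lambda|$, and then one applies the ordinary self-adjoint Lieb--Thirring inequality. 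This comparison is exactly what produces the explicit constant $C_{\gamma,d}(1+2/\kappa)^{\gamma+d/2}$ with $C_{\gamma,d}=2^{1+\gamma/2+d/4}L_{\gamma,d}$.

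More importantly, there is a genuine gap at the final step of your proposal. The ``Jensen-type inequality on the slit plane'' that you invoke to convert the bound on $\log|D(\zeta)|$ into a clean moment inequality does not deliver \eqref{eq:FLLS}. When the determinant bound $\log|D(\zeta)|\lesssim\|V\|_{L^{\gamma+d/2}}^{\gamma+d/2}\,\mathrm{dist}(\zeta,[0,\infty))^{d/2-\gamma-d/2}$ is fed into the Borichev--Golinskii--Kupin machinery, what comes out is an estimate of the type \eqref{eq:DHKineq}, with an unavoidable extra power $\tau>0$ in the numerator and a constant $C_{\gamma,d,\tau}\sim 1/\tau$ that blows up as $\tau\downarrow 0$; the paper itself points this out immediately after \eqref{eq:DHKineq}. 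One can fix $\tau$ and restrict to the sector to recover a bound of the same \emph{shape} as \eqref{eq:FLLS}, but not with the stated exponent-and-constant; in particular \eqref{eq:momegainfi} with $C_{\gamma,d}$ as in \eqref{eq:Cgammad} is out of reach by this route. The ``principal obstacle'' you acknowledge in your last sentence is therefore not a technical detail left to the reader: with the tools you name, the argument does not close, and the correct resolution is the elementary FLLS comparison argument rather than the determinant-plus-Jensen machinery.
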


Frank et al.(\cite{FLLS}) conjecture that (\ref{eq:FLLS}) and (\ref{eq:momegainfi}) in Theorem \ref{thm:FLLS} hold for $\gamma$ satisfying (\ref{eq:gammad}). 
On the other hand, observing that (\ref{eq:sLT}) is equivalent to
\begin{equation}
\sum_{n=1}^{\infty}\frac{{\rm dist}(\lambda_n;[0,\infty))^{\gamma+d/2}}{|\lambda_n|^{d/2}}\le L_{\gamma,d}\int_{{\mathbb R}^d}V_{-}(x)^{\gamma+d/2}\,{\rm d}x
\label{eq:dis-sLT}
\end{equation}
where ${\rm dist}(x;\Omega)$ is a distance from a point $x$ to the domain $\Omega$ and $\{\lambda_n\}$ are negative eigenvalues of $H$ given by (\ref{eq:neglam}), Demuth, Hansmann and Katriel(\cite{DHK1}) proposed to study the estimate: 
\begin{equation}
\sum_{\lambda \in \sigma_{{\rm d}}(H)}\frac{{\rm dist}(\lambda;[0,\infty))^{\gamma+d/2}}{|\lambda|^{d/2}}\le C_{\gamma,d}\int_{{\mathbb R}^d}|V(x)|^{\gamma+d/2}\,{\rm d}x
\label{eq:DHKconj}
\end{equation}
for the constant $C_{\gamma,d}$ independent of $V\in L^{\gamma+d/2}({\mathbb R}^d;\mathbb C)$ and for $\gamma$ and $d$ satisfying (\ref{eq:gammad}).
They(\cite{DHK3}) actually proved by applying (\ref{eq:FLLS}) that for any $\gamma \ge 1$ and $0<\tau<1$,
\begin{equation}
\sum_{\lambda \in \sigma_{{\rm d}}(H)}\frac{{\rm dist}(\lambda;[0,\infty))^{\gamma+d/2+\tau}}{|\lambda|^{d/2+\tau}}\le C_{\gamma,d,\tau}\int_{{\mathbb R}^d}|V(x)|^{\gamma+d/2}\,{\rm d}x
\label{eq:DHKineq}
\end{equation}
with constant
\[
C_{\gamma,d,\tau}=\frac{{\rm const.}}{\tau}.
\]
Note that this constant {\it blows up} as $\tau\downarrow 0$ (see also \cite{DHK2} where a similar estimate is obtained).

Related to this problem, Frank and Sabin(\cite{FS}) proved that if $d\ge 1$ and $V\in L^p({\mathbb R}^d;\mathbb C)$ such that
\[
\left\{
\begin{array}{ll} 
p=1,&\ d=1,\vspace{1mm}\\
\displaystyle 1<p\le \frac{3}{2},&\ d=2, \vspace{1mm}\\
\displaystyle \frac{d}{2}<p\le \frac{d+1}{2},&\ d\ge 3,
\end{array}
\right.
\]
then 
\[
\sum_{\lambda \in \sigma_{{\rm d}}(H)}\frac{{\rm dist}(\lambda;[0,\infty))}{|\lambda|^{(1-\varepsilon)/2}}\le C_{p,d,\varepsilon}\left(\int_{{\mathbb R}^d}|V(x)|^{p}\,{\rm d}x\right)^{(1+\varepsilon)/(2p-d)}
\]
where $\varepsilon$ is the non-negative number fulfilling:
\[
\left.
\begin{array}{ll}
\varepsilon>1, & d=1 \vspace{1mm}\\
\varepsilon\ge 0, & \displaystyle d\ge 2\ {\rm and}\ \frac{d}{2}\le p\le \frac{d^2}{2d-1} \vspace{1mm}\\
\displaystyle \varepsilon>\frac{(2d-1)p-d^2}{d-p}, & \displaystyle d\ge 2\ {\rm and}\ \frac{d^2}{2d-1}\le p\le \frac{d+1}{2}
\end{array}
\right\}.
\]

In recent years, Cuenin, Laptev, Safronov etc. studied the eigenvalues of $H$ which are close to $[0,\infty)$.
For example, they(\cite{LS}) proved that if $\Re V\ge 0$ is bounded and $\Im V\in L^p({\mathbb R}^d)$ and
\[
\left.
\begin{array}{ll}
p\ge 1, & d=1 \vspace{1mm}\\
\displaystyle p>\frac{d}{2}, & d\ge 2
\end{array}
\right\},
\]
then one has
\[
\sum_{\lambda \in \sigma_{{\rm d}}(H)}\left(\frac{\Im \lambda}{|\lambda+1|^2+1}\right)_+^p\le C_{p,d}\int_{{\mathbb R}^d}(\Im V)_+(x)^p\,{\rm d}x
\]
where
\[
C_{p,d}:=(2\pi)^{-d}\int_{{\mathbb R}^d}\frac{{\rm d}\xi}{(|\xi|^2+1)^p}.
\]

\section{Assumption and Main Results \label{sec:sec2}}
We write 
$(\cdot,\cdot)$ for the $L^2({\mathbb R}^d)$-inner product and $\|\cdot\|$ for the $L^2({\mathbb R}^d)$-norm:
\[
(f,g):=\int_{{\mathbb R}^d}f(x)\overline{g(x)}\,{\rm d}x,\qquad \|f\|:=(f,f)^{1/2}.
\]

\subsection{Dilation Analytic Method for Complex Potentials}
We consider the 1-parameter unitary group $U(\theta)$, 
$\theta \in \mathbb R$, on $L^2({\mathbb R}^d)$ defined by 
\begin{equation}
U(\theta)u(x):={\rm e}^{d\theta/2}u({\rm e}^{\theta}x),\quad u\in L^2({\mathbb R}^d).
\end{equation}
We now suppose that $V$ fulfills the following assumption.
Recall that ${\cal D}(T)$ is the domain of the operator $T$.
Moreover, we write ${\bf B}(S_1,S_2)$ for the space of bounded operators from the space $S_1$ to the space $S_2$, in particular ${\bf B}(S)$ is ${\bf B}(S,S)$ if $S_1=S_2=S$.

\begin{assump}
\label{assump:CDA}
Let $d,\gamma \ge 1$.
We assume the followings from beginning to end.
\begin{enumerate}
\item[a)] $V$ is the multiplication operator with the complex-valued measurable function satisfying $V\in L^{\gamma+d/2}({\mathbb R}^d;\mathbb C)$.
\item[b)] The operator $V$ is $H_0$-compact, that is, ${\cal D}(V)\supset {\cal D}(H_0)=H^2({\mathbb R}^d)$ and $V(H_0+1)^{-1}$ is compact.
\item[c)] The function $V_{\theta}(x):=V({\rm e}^{\theta}x)$ originally defined for $\theta\in \mathbb R$ has an analytic continuation 
into the complex strip
\[
\mathscr{S}_{\alpha}:=\{z\in \mathbb C:|\Im z|<\alpha\}\quad 
{\rm for}\ {\rm some}\ \alpha>0
\]
as an $L^{\gamma+ d/2}(\mathbb R^d;\mathbb C)$-valued function. 
\item[d)] The function $V_\theta(H_0+1)^{-1}$ originally defined for $\theta\in {\mathbb R}$ can be extended to $\mathscr{S}_\alpha$ as a ${\bf B}(L^2({\mathbb R}^d))$-valued analytic function. 
\end{enumerate}
\end{assump}

In what follows, we fix $d,\gamma \ge 1$. 
We call $V$ fulfilling Assumption \ref{assump:CDA} {\it the dilation analytic complex potential on $\mathscr{S}_{\alpha}$}.
We define
\begin{equation}
\left\{
\begin{array}{l}
H_0(\theta):={\rm e}^{-2\theta}H_0, \vspace{2mm}\\ 
H(\theta):=U(\theta)HU(\theta)^{-1}=H_0(\theta)+V_{\theta}
={\rm e}^{-2\theta}\left(H_0+{\rm e}^{2\theta}V_{\theta}\right)
\end{array}
\right.
\label{eq:Hthetae-2theta}
\end{equation}
for $\theta\in \mathscr{S}_{\alpha}$. 
It is obvious that 
$H_0(\theta)$ and $H(\theta)$ are operator-valued holomorphic 
functions of type (A) of $\theta\in \mathscr{S}_{\alpha}$ in the 
sense of Kato (\cite{K}). 
Moreover
\begin{equation}
H(\theta+\theta')=U(\theta')H(\theta)U(\theta')^{-1}; \quad 
\theta\in \mathscr{S}_{\alpha}, \ \theta'\in \mathbb R, 
\label{eq:HUHU-1}
\end{equation}
since this is true for $\theta\in \mathbb{R}$ and 
both sides of (\ref{eq:HUHU-1}) are ${\bf B}(H^2({\mathbb R}^d);L^2({\mathbb R}^d))$-valued analytic functions of $\theta \in \mathscr{S}_{\alpha}$. 
In particular, $\sigma(H(\theta))$ is independent of $\Re \theta$. 

The following result for real-valued potentials is a well known fact.
We pay attention to that the same result holds for complex-valued potentials.

\begin{prop}
If $V$ is a dilation analytic complex potential on $\mathscr{S}_{\alpha}$, one has
\begin{equation}
\sigma_{{\rm ess}}(H(\theta))={\rm e}^{-2\theta}[0,\infty)
=\{{\rm e}^{-2\theta}x:x\in [0,\infty)\}
\label{eq:sigessHtheta}
\end{equation}
for any $\theta\in \mathscr{S}_{\alpha}$.
\end{prop}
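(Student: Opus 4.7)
The plan is to reprise, for $H(\theta)$, the analytic Fredholm argument sketched in Section~1.2 for $H$ itself. Since $H_0$ is selfadjoint with $\sigma(H_0)=\sigma_{\rm ess}(H_0)=[0,\infty)$, the scaling relation $H_0(\theta)={\rm e}^{-2\theta}H_0$ immediately gives $\sigma(H_0(\theta))=\sigma_{\rm ess}(H_0(\theta))={\rm e}^{-2\theta}[0,\infty)$, so it will suffice to show that $V_\theta$ is relatively compact with respect to $H_0(\theta)$ for each $\theta\in\mathscr{S}_\alpha$ and then to rerun the analytic Fredholm step on $(H(\theta)-z)^{-1}$ off that ray.

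First I would dispatch the case of real $\theta$: Assumption b) together with the unitary intertwining $V_\theta(H_0+1)^{-1}=U(\theta)V(H_0+1)^{-1}U(\theta)^{-1}$ shows directly that $V_\theta(H_0+1)^{-1}$ is compact for every $\theta\in\mathbb{R}$. The hard part will be the next step: Assumption d) only guarantees that $\Phi(\theta):=V_\theta(H_0+1)^{-1}$ is a $\mathbf{B}(L^2(\mathbb{R}^d))$-valued analytic function on $\mathscr{S}_\alpha$, not a compact-valued one. I would upgrade this by analytic continuation of compactness: since the ideal of compact operators is norm-closed in $\mathbf{B}(L^2(\mathbb{R}^d))$, for any $\theta_0\in\mathbb{R}$ the derivatives $\Phi^{(n)}(\theta_0)$ are norm limits of difference quotients formed from compact values on $\mathbb{R}$ and therefore compact; the Taylor series of $\Phi$ about $\theta_0$ then converges in norm to a compact sum on the whole disc of convergence, and a standard connectedness argument extends compactness to all of $\mathscr{S}_\alpha$.

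Once this is in hand, for $z\notin{\rm e}^{-2\theta}[0,\infty)$ I would use
\[
V_\theta(H_0(\theta)-z)^{-1}={\rm e}^{2\theta}\,\Phi(\theta)\,(H_0+1)(H_0-{\rm e}^{2\theta}z)^{-1},
\]
in which the right-hand factor is bounded and $\Phi(\theta)$ is compact, so the product is compact. Then, mimicking the analytic Fredholm argument recalled in Section~1.2, the identity $(H(\theta)-z)^{-1}=(H_0(\theta)-z)^{-1}\bigl(I+V_\theta(H_0(\theta)-z)^{-1}\bigr)^{-1}$ together with the decay $\|V_\theta(H_0(\theta)-z)^{-1}\|\to 0$ as $|\Im({\rm e}^{2\theta}z)|\to\infty$ produces a meromorphic continuation of the resolvent on $\mathbb{C}\setminus{\rm e}^{-2\theta}[0,\infty)$ whose poles are isolated eigenvalues of finite algebraic multiplicity. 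This yields $\sigma_{\rm ess}(H(\theta))={\rm e}^{-2\theta}[0,\infty)$.

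In summary, the crux is the compactness-propagation step from the real axis to the full strip; everything else is a direct transcription of the analytic Fredholm framework already used for $H$, and the complex-valued nature of $V$ plays no role beyond what was already handled in Section~1.2.
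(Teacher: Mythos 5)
Your setup and your compactness-propagation argument are sound, and in fact slightly more careful than the paper itself. The paper just says that the analytic Fredholm argument ``may be proved as previously'' for $\tilde H(\theta)=H_0+{\rm e}^{2\theta}V_\theta$, without addressing why $V_\theta(H_0+1)^{-1}$ remains compact for non-real $\theta$: Assumption~d) only supplies ${\bf B}(L^2)$-valued analyticity. Your observation that the compact operators form a norm-closed ideal, so that an analytic ${\bf B}(L^2)$-valued function taking compact values on $\mathbb{R}$ (a set with limit points) is compact-valued throughout the connected strip, is exactly what is needed to make that step rigorous. (The cleanest packaging is to compose with the quotient map onto the Calkin algebra and invoke the identity theorem, but your Taylor-series-plus-connectedness version is fine.)

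However, there is a genuine gap at the end. The analytic Fredholm argument gives a meromorphic continuation of $(H(\theta)-z)^{-1}$ on $\mathbb{C}\setminus{\rm e}^{-2\theta}[0,\infty)$ with poles that are isolated eigenvalues of finite algebraic multiplicity, and this proves only the inclusion $\sigma_{\rm ess}(H(\theta))\subset {\rm e}^{-2\theta}[0,\infty)$. It does not, by itself, exclude the possibility that part of the ray lies in the resolvent set of $H(\theta)$, so the asserted equality does not follow from what you have shown. This is precisely where the paper does extra work: passing to $\tilde H(\theta)={\rm e}^{2\theta}H(\theta)$, it supposes for contradiction that some open interval $(a,b)\subset[0,\infty)$ is free of $\sigma(\tilde H(\theta))$ and then reruns the very same analytic Fredholm argument with the roles of $\tilde H(\theta)$ and $H_0$ reversed, treating $-{\rm e}^{2\theta}V_\theta$ as a relatively compact perturbation of $\tilde H(\theta)$. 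That forces $\sigma(H_0)\cap(a,b)$ to be discrete, contradicting $\sigma(H_0)=[0,\infty)$; hence the spectrum is dense in, and by closedness contains, the whole ray, and no point of the ray can be an isolated eigenvalue. You need to add this (or an equivalent Weyl-sequence or stability-of-Fredholm-index argument) to obtain the reverse inclusion and complete the proof.
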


\begin{proof}
Define, for fixed $\theta \in \mathscr{S}_{\alpha}$,
\begin{equation}
\tilde{H}(\theta):=H_0+{\rm e}^{2\theta}V_{\theta}={\rm e}^{2\theta}H(\theta).
\label{eq:Htilde}
\end{equation}
That $\sigma(\tilde{H}(\theta))\setminus [0,\infty)$ consists of 
isolated eigenvalues of finite multliplicities may be proved as previously by the argument using analytic Fredholm theory for 
\[
F(z):=-{\rm e}^{2\theta}V_{\theta}(H_0-z)^{-1}.
\] 
In  particular, $\sigma_{{\rm ess}}(\tilde{H}(\theta))\subset [0,\infty)$. 
We show that $[0,\infty)\subset \sigma_{{\rm ess}}(\tilde{H}(\theta))$.
Suppose that there is an open interval $(a,b)\subset [0,\infty)$ such that 
\[
(a,b)\subset \mathbb C\setminus \sigma(\tilde{H}(\theta)).
\] 
Then, $\sigma(H_0)\cap (a,b)$ must be a discrete set by virtue of the argument above where the roles of $\tilde{H}(\theta)$ and $H_0$ are replaced.
This is of course impossible and $[0,\infty)\subset \sigma_{{\rm ess}}(\tilde{H}(\theta))$.
Therefore, 
\begin{equation}
\sigma_{{\rm ess}}(\tilde{H}(\theta))=\sigma_{{\rm ess}}(H_0)=[0,\infty), 
\label{eq:tHthetaH0}
\end{equation}
and
\[
\sigma_{{\rm ess}}(H(\theta))={\rm e}^{-2\theta}\sigma_{{\rm ess}}(\tilde{H}(\theta))={\rm e}^{-2\theta}[0,\infty)
\]
by (\ref{eq:Htilde}) and (\ref{eq:tHthetaH0}).
\end{proof}

\begin{rem}
(\ref{eq:sigessHtheta}) is immediately derived by Weyl's theorem for essential spectrums if $H$ is self-adjoint. (See e.g. Theorem XIII.36 of \cite{RS4}.)
\end{rem}

\subsection{Estimates on Eigenvalues in $\mathbb C\setminus [0,\infty)$}
The main result in the present paper is the following theorem.

\begin{thm}
\label{thm:EEonC}  
Let $V$ be a dilation analytic complex potential on $\mathscr{S}_{\alpha}$ with $\alpha>\pi/4$. 
Suppose that $C_{\gamma,d}$ is the constant given by (\ref{eq:Cgammad}).
Then, one has
\begin{equation}
\sum_{\lambda \in \sigma_{{\rm d}}(H)\,\cap\,[{\mathbb C}^{\pm}\cup (-\infty,0)]}|\lambda|^{\gamma}
\le C_{\gamma,d}\int_{{\mathbb R}^d}|V_{\pm \pi {\rm i}/4}(x)|^{\gamma+d/2}\,{\rm d}x.
\label{eq:EEonC1}
\end{equation}
\end{thm}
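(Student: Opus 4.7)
The plan is to exploit the freedom in choosing the dilation parameter. Since $\alpha>\pi/4$, both $\theta=\pi i/4$ and $\theta=-\pi i/4$ lie in $\mathscr{S}_\alpha$, so the shifted operator
\[
\tilde H(\theta):=H_0+e^{2\theta}V_\theta=e^{2\theta}H(\theta)
\]
from (\ref{eq:Htilde}) is a well-defined closed operator for these two choices. Because $e^{2(\pm\pi i/4)}=\pm i$ has modulus one, $\tilde H(\pm\pi i/4)=H_0\pm iV_{\pm\pi i/4}$ is a Schr\"odinger operator whose complex potential $\pm iV_{\pm\pi i/4}$ lies in $L^{\gamma+d/2}(\mathbb R^d;\mathbb C)$ by Assumption \ref{assump:CDA} c), so Theorem \ref{thm:FLLS} applies to it directly.

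Next I would feed $\tilde H(\pm\pi i/4)$ into the estimate (\ref{eq:momegainfi}). Since $|\pm iV_{\pm\pi i/4}|=|V_{\pm\pi i/4}|$, this yields
\[
\sum_{\substack{\nu\in \sigma_{{\rm d}}(\tilde H(\pm\pi i/4))\\ \Re\nu\le 0}}|\nu|^{\gamma}\le C_{\gamma,d}\int_{\mathbb R^d}|V_{\pm\pi i/4}(x)|^{\gamma+d/2}\,{\rm d}x.
\]
The identity $\tilde H(\theta)=e^{2\theta}H(\theta)$ shows that $\nu$ is an eigenvalue of $\tilde H(\pm\pi i/4)$ with algebraic multiplicity $m$ if and only if $\nu=\pm i\lambda$ for some eigenvalue $\lambda$ of $H(\pm\pi i/4)$ of the same multiplicity. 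Writing $\nu=\pm i\lambda$ gives $\Re\nu=\mp\Im\lambda$ and $|\nu|=|\lambda|$, so the condition $\Re\nu\le 0$ is equivalent to $\pm\Im\lambda\ge 0$, and the above bound becomes
\[
\sum_{\substack{\lambda\in \sigma_{{\rm d}}(H(\pm\pi i/4))\\ \pm\Im\lambda\ge 0}}|\lambda|^{\gamma}\le C_{\gamma,d}\int_{\mathbb R^d}|V_{\pm\pi i/4}(x)|^{\gamma+d/2}\,{\rm d}x.
\]

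It remains to identify the sum on the left with the sum over $\sigma_{{\rm d}}(H)\cap[\mathbb C^\pm\cup(-\infty,0)]$ appearing in (\ref{eq:EEonC1}), and for this I would invoke the invariance of isolated eigenvalues and their algebraic multiplicities under complex dilation proved earlier in the paper. As $\theta$ travels along the vertical segment from $0$ to $\pm\pi i/4$ the essential spectrum $e^{-2\theta}[0,\infty)$ rotates as a half-line from $[0,\infty)$ through the lower (resp.\ upper) half-plane to $\mp i[0,\infty)$; a point $\lambda\in \mathbb C^\pm\cup(-\infty,0)$ is therefore never crossed by this sweeping ray, so the invariance result keeps it as an isolated eigenvalue of $H(\pm\pi i/4)$ with unchanged multiplicity, and conversely forbids new eigenvalues in that region. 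Since for any such $\lambda$ one has $\pm\Im\lambda\ge 0$ automatically, combining this with the previous display gives (\ref{eq:EEonC1}). The main obstacle is exactly this invariance step: one must ensure that during the continuous deformation no eigenvalue is absorbed into, nor emitted by, the continuous spectrum---a matter handled by the analytic-family structure of $H(\theta)$ on $\mathscr{S}_\alpha$ together with the constancy of Riesz projections on connected components of the complement of the essential spectrum. Once this is granted, the proof reduces to the short computation sketched above.
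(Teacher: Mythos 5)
Your proposal is correct and takes essentially the same route as the paper: apply the bound (\ref{eq:momegainfi}) of Theorem\,\ref{thm:FLLS} to $\tilde H(\pm\pi{\rm i}/4)=H_0\pm{\rm i}V_{\pm\pi{\rm i}/4}$ and transfer the eigenvalues of $H$ in ${\mathbb C}^{\pm}\cup(-\infty,0)$, with their algebraic multiplicities, to $H(\pm\pi{\rm i}/4)$ via invariance under complex dilation. The invariance step you single out as the main obstacle is precisely what the paper's Lemmas\,\ref{lem:Som1} and \ref{lem:Som2} establish, by the same analytic-family-of-type-(A) and Riesz-projection arguments you sketch.
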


As follows, Theorem \ref{thm:EEonC} derives the Lieb-Thirring type inequality for {\it all} isolated eigenvalues of the non-self-adjoint Schr\"{o}dinger operator with the dilation analytic complex potential.

\begin{cor}
\label{cor:ULEE}
If $V$ is a dilation analytic complex potential on $\mathscr{S}_{\alpha}$ with $\alpha>\pi/4$, then one has
\[
\sum_{\lambda \in \sigma_{{\rm d}}(H)}|\lambda|^{\gamma}
\le C_{\gamma,d}\int_{{\mathbb R}^d}\left(|V_{\pi {\rm i}/4}(x)|^{\gamma+d/2}+|V_{-\pi {\rm i}/4}(x)|^{\gamma+d/2}\right)\,{\rm d}x
\]
where $C_{\gamma,d}$ is the constant of (\ref{eq:Cgammad}).
\end{cor}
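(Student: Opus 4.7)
The plan is to deduce the corollary from Theorem~\ref{thm:EEonC} by applying it twice (once with each sign) and combining the two resulting bounds via a simple union bound.

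First, I would record that $H=H(0)$ has $\sigma_{{\rm ess}}(H)=[0,\infty)$ by the Proposition above, so discrete eigenvalues, being isolated points of the spectrum with finite algebraic multiplicity, must lie in $\mathbb{C}\setminus[0,\infty)$. Hence
\[
\sigma_{{\rm d}}(H)\;\subset\;\mathbb{C}^{+}\cup\mathbb{C}^{-}\cup(-\infty,0).
\]

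Next, I would observe the set-theoretic identity
\[
\mathbb{C}^{+}\cup\mathbb{C}^{-}\cup(-\infty,0)\;=\;\bigl[\mathbb{C}^{+}\cup(-\infty,0)\bigr]\cup\bigl[\mathbb{C}^{-}\cup(-\infty,0)\bigr],
\]
so every $\lambda\in\sigma_{{\rm d}}(H)$ is captured by at least one of the two sums appearing in Theorem~\ref{thm:EEonC}. Counting each eigenvalue with its algebraic multiplicity, this gives
\[
\sum_{\lambda\in\sigma_{{\rm d}}(H)}|\lambda|^{\gamma}
\;\le\;\sum_{\lambda\in\sigma_{{\rm d}}(H)\cap[\mathbb{C}^{+}\cup(-\infty,0)]}|\lambda|^{\gamma}
\;+\;\sum_{\lambda\in\sigma_{{\rm d}}(H)\cap[\mathbb{C}^{-}\cup(-\infty,0)]}|\lambda|^{\gamma};
\]
the two regions overlap on $(-\infty,0)$, which only overcounts the negative real eigenvalues and so merely loosens the bound.

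Finally, I would apply Theorem~\ref{thm:EEonC} with the upper sign to the first sum on the right and with the lower sign to the second sum, then add the resulting inequalities to obtain the stated estimate. There is no real obstacle here: the step is essentially a set-theoretic dissection of $\mathbb{C}\setminus[0,\infty)$ into two overlapping half-planes (closed along the negative real axis), and the Corollary is an immediate consequence of Theorem~\ref{thm:EEonC} once this dissection is noted.
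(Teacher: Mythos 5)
Your argument is correct and is exactly the intended (and essentially only) route: since $\sigma_{\rm d}(H)\subset\mathbb{C}\setminus[0,\infty)=[\mathbb{C}^{+}\cup(-\infty,0)]\cup[\mathbb{C}^{-}\cup(-\infty,0)]$, one simply adds the two sign-cases of Theorem\,\ref{thm:EEonC}, the double counting of negative real eigenvalues being harmless because all terms are nonnegative. The paper states the corollary without proof precisely because it follows by this immediate summation, so your write-up matches the paper's approach.
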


\begin{rem}
It is predicted that in general the integrals on the right-hand side of (\ref{eq:EEonC1}) are bigger than $\int|V(x)|^{\gamma+d/2}\,{\rm d}x$.
(See the following example.)
However, we can of course take minimum constants $C^{\pm}$ satisfying
\[
\int_{{\mathbb R}^d}|V_{\pm \pi {\rm i}/4}(x)|^{\gamma+d/2}\,{\rm d}x\le C^{\pm}\int_{{\mathbb R}^d}|V(x)|^{\gamma+d/2}\,{\rm d}x
\]
respectively, so we come to the conclusion that {\it the Lieb-Thirring inequality (with a worse constant) for all isolated eigenvalues of ${\rm H}$ with the dilation analytic complex potential establishes} from Corollary \ref{cor:ULEE}:
\[
\sum_{\lambda \in \sigma_{{\rm d}}(H)}|\lambda|^{\gamma}
\le C\int_{{\mathbb R}^d}|V(x)|^{\gamma+d/2}\,{\rm d}x,\quad C:=2\max\{C_{\gamma,d}C^{-},C_{\gamma,d}C^{+}\}.
\]
Here, in general $C^{\pm}$ probably depend on $V$ in both cases.
\end{rem}

{\small
\begin{ex}
We consider a 1-dimensional complex potential
\begin{equation}
V(x)=\frac{c}{(1+x^2)^s},\quad x\in {\mathbb R}
\label{eq:Som-ex}
\end{equation}
where $c\in \mathbb C$ and $1/2<s<1$. 
$V$ is a dilation analytic complex potential on $\mathscr{S}_{\alpha}$ with $\alpha\in (\pi/4,\pi/2)$.
In fact, a singularity appears if $\theta=\pi {\rm i}/2$ since 
\[
V_{\theta}(x)=\frac{c}{(1+{\rm e}^{2\theta}x^2)^s},
\]
moreover $V_{\theta}\in L^{\gamma+1/2}({\mathbb R}^d)$ if and only if
\[
\frac{1}{2\gamma+1}<s<\frac{2}{2\gamma+1}.
\]
In particular $\gamma=1$, we obtain
\begin{align*}
\int_{-\infty}^{\infty}|V_{\pm \pi {\rm i}/4}(x)|^{3/2}\,{\rm d}x
&=\int_{-\infty}^{\infty}\frac{|c|^{3/2}}{\sqrt{1+x^4}^{3s/2}}\,{\rm d}x \\
&\ge \int_{-\infty}^{\infty}\frac{|c|^{3/2}}{(1+x^2)^{3s/2}}\,{\rm d}x \\
&=\int_{-\infty}^{\infty}|V(x)|^{3/2}\,{\rm d}x
\end{align*}
as expected.
By the way, if $V:{\mathbb R}^d\to \mathbb C$ is a complex-valued potential like (\ref{eq:Som-ex}), that is, if $V$ satisfies that 
\[
|V(x)|\le \frac{{\rm const.}}{(1+|x|^2)^s},\quad \ \frac{1}{2}<s<1,
\]
it is known(\cite{Sa}) that all non-real eigenvalues of the non-self-adjoint Schr\"{o}dinger operator $H=-\Delta+V$ are in a disc of a finite radius.
\end{ex}
}

We denote the number of the isolated eigenvalues of the operator $T$ in a domain $\Omega$ by $N(T;\Omega)$.
If we add another condition besides Assumption \ref{assump:CDA} as follows, we can mention the number of all isolated eigenvalues of $H$.

\begin{cor}
Let $H$ be a dissipative Schr\"{o}dinger operator with the dilation analytic complex potential on $\mathscr{S}_{\alpha}$ with $\alpha>\pi/4$.
That is, we suppose that
\begin{itemize}
\item[i)] the function $\mathbb R^d\ni x\mapsto V(x)$ satisfies $\Im V(x)<0$ for all $x\in \mathbb R^d$ and \item[ii)] there exists some constant $C$ such that
\begin{equation}
\label{eq:Vdecay}
|\Re V(x)|,|\Im V(x)|\le C\langle x\rangle^{-\rho}
\end{equation}
outside the sufficiently large sphere, where $\langle x\rangle:=(1+|x|^2)^{1/2}$ and $\rho>1$.
\end{itemize}
Then, one has
\begin{equation}
\label{eq:NHC}
N(H;\mathbb C\setminus[0,\infty))\le \tilde{C}_{\gamma,d}\int_{{\mathbb R}^d}\left(|V_{\pi {\rm i}/4}(x)|^{\gamma+d/2}+|V_{-\pi {\rm i}/4}(x)|^{\gamma+d/2}\right)\,{\rm d}x
\end{equation}
where $\tilde{C}_{\gamma,d}:=C_{\gamma,d}/\inf_{\lambda \in \sigma_{{\rm d}}(H)}|\lambda|^{\gamma}$.
\label{cor:EEonC}
\end{cor}

\begin{proof}
We put $\Lambda:=\inf_{\lambda\in \sigma_{{\rm d}}(H)}|\lambda|^{\gamma}$ for simplicity.
First, 
\[
\sum_{\lambda\in \sigma_{{\rm d}}(H)}|\lambda|^{\gamma}
\ge \left(\inf_{\lambda\in \sigma_{{\rm d}}(H)}|\lambda|^{\gamma}\right)\sum_{\lambda\in \sigma_{{\rm d}}(H)}1=\Lambda N(H;\mathbb C\setminus [0,\infty)).
\]
Next, it is proved that $0$ is not an accumulation point of isolated eigenvalues of $H$ if $H$ is dissipative and $V$ satisfies (\ref{eq:Vdecay}) (See Theorem 1.1 of \cite{W}), so $\Lambda>0$.
Hence, (\ref{eq:NHC}) holds from the above.
\end{proof}

\section{Proof of Theorem\,\ref{thm:EEonC}\label{sec:PT}}
The estimate (\ref{eq:momegainfi}) of Theorem \ref{thm:FLLS} plays an important role in the proof of the main theorem.
The points of a proof of Theorem \ref{thm:EEonC} are that isolated eigenvalues of $H$ and their multiplicities are invariant under complex dilation.
Let us prove them by dividing into two lemmmas.

We begin with the following lemma. 
Recall that $\sigma(H(\theta))$ is discrete in 
$\mathbb{C}\setminus \sigma_{{\rm ess}}(H(\theta))$ and all $\lambda\in \sigma_{{\rm d}}(H(\theta))$ have finite algebraic multiplicities. 

\begin{lem}
Suppose that $0<\alpha<\pi/2$ and $\theta \in \mathscr{S}_\alpha \cap {\mathbb C}^{\pm}$.
Then, for $\lambda\in \mathbb{C}^{\pm}$,  
$\lambda \in \sigma_{\rm d}(H)$ if and only if $\lambda \in \sigma_{{\rm d}}(H(\theta))$.
\label{lem:Som1}
\end{lem}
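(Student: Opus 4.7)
My plan is to exploit the analyticity of the holomorphic family $H(\theta)$ together with the unitary equivalence $H(\theta)=U(\theta)HU(\theta)^{-1}$ valid for real $\theta$. By the covariance relation (\ref{eq:HUHU-1}), $\sigma(H(\theta))$ depends only on $\Im\theta$, so it suffices to take $\theta={\rm i}\beta$ with $0<\beta<\alpha<\pi/2$ (the ${\mathbb C}^{-}$ case being symmetric). The key geometric observation is that, since $\alpha<\pi/2$, the essential spectrum $\sigma_{{\rm ess}}(H({\rm i}\beta'))={\rm e}^{-2{\rm i}\beta'}[0,\infty)$ lies in the closed lower half-plane for every $\beta'\in[0,\alpha)$. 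Hence any $\lambda\in{\mathbb C}^{+}$ remains separated from the essential spectrum throughout the deformation $\beta'\in[0,\beta]$, and therefore $\lambda$ is, at each $\beta'$, either in the resolvent set of $H({\rm i}\beta')$ or a point of $\sigma_{{\rm d}}(H({\rm i}\beta'))$.

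The ``$\Longrightarrow$'' direction I would handle via the Birman--Schwinger reformulation: $\lambda\in\sigma_{{\rm d}}(H(\theta))$ iff $1+K(\theta)$ fails to be invertible, where $K(\theta):=V_{\theta}(H_{0}(\theta)-\lambda)^{-1}$. By Assumption\,\ref{assump:CDA}, $\theta\mapsto K(\theta)$ is a compact-operator-valued analytic function on the open set $\Omega:=\{\theta\in\mathscr{S}_{\alpha}: {\rm e}^{2\theta}\lambda\notin[0,\infty)\}$. The removed horizontal line $\Im\theta=-\arg(\lambda)/2\in(-\pi/2,0)$ lies strictly below the real axis, so the connected component $\Omega_{+}$ of $\Omega$ containing ${\mathbb R}\cap\mathscr{S}_{\alpha}$ also contains ${\rm i}\beta$. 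By the analytic Fredholm theorem, the non-invertibility set $S:=\{\theta\in\Omega_{+}: 1+K(\theta)\text{ not invertible}\}$ is either all of $\Omega_{+}$ or discrete in $\Omega_{+}$; if $\lambda\in\sigma_{{\rm d}}(H)$, unitary equivalence places the whole real interval $(-\alpha,\alpha)\subset S$, forcing $S=\Omega_{+}$ and in particular $\lambda\in\sigma_{{\rm d}}(H({\rm i}\beta))$.

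For ``$\Longleftarrow$'' I would invoke the Riesz projection
\[
P(\theta):=-\frac{1}{2\pi{\rm i}}\oint_{|z-\lambda|=\epsilon}(H(\theta)-z)^{-1}\,{\rm d}z
\]
for $\epsilon>0$ so small that $\overline{D(\lambda,\epsilon)}\subset{\mathbb C}^{+}$ and $\overline{D(\lambda,\epsilon)}\cap\sigma(H)\subseteq\{\lambda\}$. Then ${\rm rank}\,P(\theta)$ equals the total algebraic multiplicity of eigenvalues of $H(\theta)$ in $D(\lambda,\epsilon)$; it is locally constant on $\{\theta:\partial D(\lambda,\epsilon)\cap\sigma(H(\theta))=\emptyset\}$; and equals $m_{\lambda}(H)$ (with the convention $m_{\lambda}(H):=0$ if $\lambda\notin\sigma_{{\rm d}}(H)$) along the real axis, by unitary equivalence. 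A chaining argument along $\beta'\in[0,\beta]$, shrinking $\epsilon$ at the discrete $\beta'$ where an eigenvalue of $H({\rm i}\beta')$ crosses $\partial D(\lambda,\epsilon)$, extends the identity ${\rm rank}\,P=m_{\lambda}(H)$ to $\theta={\rm i}\beta$. Since the Puiseux branches of eigenvalues of $H(\theta)$ must be constant $\equiv\lambda$ on the real axis and hence, by the identity theorem, globally constant in their common analytic continuation, the eigenvalues of $H({\rm i}\beta)$ in $D(\lambda,\epsilon)$ are precisely $\lambda$; this yields the equivalence $\lambda\in\sigma_{{\rm d}}(H)\Longleftrightarrow\lambda\in\sigma_{{\rm d}}(H({\rm i}\beta))$, with matching algebraic multiplicities. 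The principal obstacle is executing the chaining carefully: at each contour-crossing value of $\beta'$ the projection must be redefined with a smaller $\epsilon$, and one has to verify that the total multiplicity inside the successively shrinking disk is conserved---a delicate but standard bookkeeping in analytic perturbation theory.
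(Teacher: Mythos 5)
Your proposal is correct in substance but proceeds differently from the paper, at least for one half. The paper proves the lemma in one stroke: $\{H(\theta)\}$ is a type (A) analytic family, so by Kato's theorem the discrete eigenvalues near a given one are branches of Puiseux series; since $\sigma(H(\theta))$ depends only on $\Im\theta$, each branch is constant, and the eigenvalue $\lambda$ therefore persists throughout $\mathscr{S}_\alpha\cap\mathbb{C}^{\pm}$ (the statement about multiplicities is deferred to Lemma\,\ref{lem:Som2}, proved there with Riesz projections). Your ``$\Longleftarrow$'' direction is essentially this argument, fleshed out with the projections; your ``$\Longrightarrow$'' direction, via the analytic Fredholm alternative for $\theta\mapsto 1+V_\theta(H_0(\theta)-\lambda)^{-1}$ on the component $\Omega_+$, is genuinely different and quite robust: non-invertibility on the real axis (a non-discrete set) forces non-invertibility on all of $\Omega_+$. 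Note that this same dichotomy gives you ``$\Longleftarrow$'' for free: if $\lambda\in\sigma_{\rm d}(H({\rm i}\beta))$ then, by $\Re\theta$-invariance, the whole horizontal line $\Im\theta=\beta$ lies in the non-invertibility set, which again is not discrete, hence $\theta=0$ belongs to it as well; this lets you avoid entirely the delicate disk-shrinking bookkeeping you flag. Two small points to tighten if you keep your version: compactness of $V_\theta(H_0(\theta)-\lambda)^{-1}$ for non-real $\theta$ is not literally contained in Assumption\,\ref{assump:CDA}\,d) and needs a line (e.g.\ the identity theorem applied to the image of the analytic family in the Calkin algebra, a point the paper also leaves implicit), and in your chaining argument the quantity that is locally constant is the multiplicity of $H({\rm i}\beta')$ at the point $\lambda$ itself, not the total rank of the Riesz projection, which can genuinely drop when the disk is shrunk past other eigenvalues; the correct bookkeeping combines local rank constancy with the constancy of the branches emanating from $\lambda$, exactly the ingredient the paper uses.
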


\begin{proof} 
We prove only the case $\theta \in \mathscr{S}_\alpha \cap {\mathbb C}^{+}$ 
and $\lambda \in {\mathbb C}^+$.  
The other case may be proved simliarly. 
Suppose that $\lambda(\theta)\in \mathbb{C}^{+}$ is an eigenvalue of $H(\theta)$ for $\theta\in \mathscr{S}_{\alpha}\cap \mathbb{C}^{+}$.
We show that if $\lambda$ is an eigenvalue of $H(\theta_0)$ for
$\theta_0 \in \mathscr{S}_{\alpha}\cap \mathbb{C}^{+}$ then $\lambda$ is an eigenvalue 
of $H(\theta)$ for all $\theta \in \mathscr{S}_{\alpha}\cap \mathbb{C}^{+}$. To see this, 
we recall that $\{H(\theta)\}_{\theta\in \mathscr{S}_{\alpha}\cap \,\mathbb{C}^{+}}$ is an analytic family of type (A) in the sense of Kato. Then, Theorem 1.8 of Chapter VII \S1.3 (or Chapter II \S1) of \cite{K} implies that those $\lambda(\theta)\in \sigma_{{\rm d}}(H(\theta))$ such that $\lambda(\theta)\to \lambda$ as $\theta\to \theta_0$ are given by the branches of one or several analytic functions as Puiseux series.
Moreover, as remarked above, $\lambda(\theta)$ is independent of $\Re \theta$. 
It follows that $\lambda(\theta)=\lambda$ for all $\theta$ near $\theta_0$, and $\lambda(\theta)=\lambda,\ \theta\in \mathscr{S}_{\alpha}\cap {\mathbb C}^{+}$.
\end{proof}

\begin{lem}
Let $V$ be a dilation analytic complex potential on $\mathscr{S}_{\alpha}$ with $\alpha>\pi/4$.
Suppose that $\lambda \in {\mathbb C}^{\pm}$ is an eigenvalue of $H$ 
and $\theta \in \mathscr{S}_{\alpha}\cap \mathbb C^{\pm}$. 
Then, the algebraic multiplicity of $\lambda$ as eigenvalue 
of $H(\theta)$ coincides with that of $\lambda$ as eigenvalue of $H$.
\label{lem:Som2}
\end{lem}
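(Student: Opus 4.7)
The plan is to identify the algebraic multiplicity of $\lambda$ as the rank of the Riesz spectral projection
\[
P(\theta):=-\frac{1}{2\pi{\rm i}}\oint_{\Gamma}(H(\theta)-z)^{-1}\,{\rm d}z,
\]
where $\Gamma$ is a small positively oriented circle around $\lambda$ chosen so that $\Gamma$ encloses only $\lambda$ among the points of $\sigma(H(\theta))$. Then $\dim\,{\rm range}\,P(\theta)$ equals the algebraic multiplicity of $\lambda$ as an eigenvalue of $H(\theta)$, and the task reduces to showing that ${\rm rank}\,P(\theta)$ is unchanged as $\theta$ moves from $0$ to the given point in $\mathscr{S}_{\alpha}\cap{\mathbb C}^{+}$ (the ${\mathbb C}^{-}$ case being symmetric).

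First, I would exploit (\ref{eq:HUHU-1}) with $\theta'\in{\mathbb R}$. Since $U(\theta')$ is unitary, $P(\theta+\theta')=U(\theta')P(\theta)U(\theta')^{-1}$, so ${\rm rank}\,P(\theta)$ depends only on $\Im\theta$. This reduces the problem to the purely imaginary path $s\mapsto H({\rm i}s)$ for $s\in[0,\beta]$ with $\beta:=\Im\theta$. Along this path $\{H({\rm i}s)\}$ is an analytic family of type (A), so $(H({\rm i}s)-z)^{-1}$ is analytic in $s$ for any fixed $z\in\rho(H({\rm i}s))$.

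The second step is to fix a single small contour $\Gamma$ with $\Gamma\cap\sigma(H({\rm i}s))=\emptyset$ for every $s\in[0,\beta]$. The essential spectrum ${\rm e}^{-2{\rm i}s}[0,\infty)$ is a ray rotating clockwise from $[0,\infty)$; because $\arg\lambda\in(0,\pi)$ and $\lambda$ is a discrete eigenvalue of both $H=H(0)$ and $H({\rm i}\beta)$, this ray stays at positive distance from $\lambda$ throughout the interval. Moreover, the Puiseux-branch-plus-$\Re\theta$-independence argument from the proof of Lemma\,\ref{lem:Som1} forces the eigenvalue branches through $\lambda$ to be constant in $\theta$, so no other discrete eigenvalue of $H({\rm i}s)$ can slide into a neighborhood of $\lambda$ as $s$ varies. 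With $\Gamma$ so fixed, $s\mapsto P({\rm i}s)$ is operator-norm analytic on $[0,\beta]$, and since the rank of a continuous family of bounded projections is locally constant on a connected set, it is constant on $[0,\beta]$.

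Evaluating at the endpoints gives ${\rm rank}\,P(0)$ equal to the algebraic multiplicity of $\lambda$ as an eigenvalue of $H$ and ${\rm rank}\,P({\rm i}\beta)$ equal to that of $\lambda$ as an eigenvalue of $H({\rm i}\beta)=H(\theta)$ (after the initial horizontal reduction), completing the argument. The main obstacle I expect is the uniform choice of $\Gamma$ along the whole path: one must control both the geometry of the rotating essential spectrum as it sweeps from $[0,\infty)$ to ${\rm e}^{-2{\rm i}\beta}[0,\infty)$ and the non-collision of other discrete eigenvalues with $\lambda$, with the latter supplied by the constancy of the Puiseux branches as in Lemma\,\ref{lem:Som1}.
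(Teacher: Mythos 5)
Your proposal is correct and follows essentially the same route as the paper: identify the algebraic multiplicity with the rank of the Riesz projection, use norm continuity/analyticity of $P_\lambda(\theta)$ in $\theta$, and conclude by local constancy of the rank of a continuous family of projections along a connected path ending at $\theta=0$. The paper implements that last step explicitly via Kato's Theorems 6.32 and 6.35 (nearby projections with $\|P-Q\|<1$ are unitarily equivalent) rather than quoting the locally-constant-rank fact, and it works with locally chosen contours instead of your single contour fixed along the whole path, but these are only cosmetic differences.
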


\begin{proof}
We define two Riesz projections onto the generalized eigenspaces:
\begin{equation}
P_{\lambda}:=\frac{-1}{2\pi {\rm i}}\oint_{\Gamma_{\varepsilon}}(H-z)^{-1}\,{\rm d}z,\quad \ 
P_{\lambda}(\theta):=\frac{-1}{2\pi {\rm i}}\oint_{\Gamma_{\varepsilon}}(H(\theta)-z)^{-1}\,{\rm d}z
\label{eq:Plam}
\end{equation}
for $\theta\in \mathscr{S}_{\alpha}\cap \mathbb C^{\pm}$, where $\Gamma_{\varepsilon}$ is the contour around $\lambda$ defined by
\begin{equation}
\Gamma_{\varepsilon}:=\{z\in \mathbb C:|z-\lambda|=\varepsilon\}
\label{eq:contour}
\end{equation}
with sufficient small $\varepsilon>0$ such that $\Gamma_{\varepsilon}$ does not enclose any other point in $\sigma(H)$ or $\sigma(H(\theta))$ except $\lambda$. 
Here, notice that $P_{\lambda}$ is not necessary an orthogonal projection, since $H$ is non-self-adjoint.
It is well known that $P_{\lambda},P_{\lambda}(\theta)$ are projections and 
\[
P_{\lambda}^2=P_{\lambda},\quad \ 
P_{\lambda}(\theta)^2=P_{\lambda}(\theta)
\]
onto the respective generalized eigenspaces of $H$ and $H(\theta)$ associated with the eigenvalue $\lambda$.
It follows that
\[
m_{\lambda}={\rm rank}\,P_{\lambda},\quad \ m_{\lambda(\theta)}={\rm rank}\,P_{\lambda}(\theta).
\]
We write $P_{\lambda}'(\theta)$ for the orthogonal projection onto $M_{\lambda}(\theta):=P_{\lambda}(\theta)L^2({\mathbb R}^d)$.
In order to prove this lemma, we show that the dimension of $M_{\lambda}(\theta)$ is independent of $\theta$.
$P_{\lambda}(\theta)$ is analytic with respect to $\theta$.
It is known that
\[
\|P_{\lambda}'(\theta)-P_{\lambda}'(\sigma)\|\le \|P_{\lambda}(\theta)-P_{\lambda}(\sigma)\|
\]
for any $\theta,\sigma\in \mathscr{S}_{\alpha}$ (see Theorem 6.35 in Chapter I $\S$6.8 of \cite{K}).
Since $P_{\lambda}(\theta)\to P_{\lambda}(\sigma)$ as $\theta\to \sigma$ in norm by the analyticity of $P_{\lambda}(\theta)$ (see (\ref{eq:Plam})) and 
\[
\left(P_{\lambda}'(\theta)-P_{\lambda}'(\sigma)\right)^2+\left(1-P_{\lambda}'(\theta)-P_{\lambda}'(\sigma)\right)^2=1,
\]
one has
\[
\|P_{\lambda}'(\theta)-P_{\lambda}'(\sigma)\|<1.
\]
Thus, $P_{\lambda}'(\theta)$ and $P_{\lambda}'(\sigma)$ are unitarily equivalent by Theorem 6.32 in Chapter I $\S$6.8 of \cite{K}.
Hence
\[
\dim M_{\lambda}(\theta)=\dim M_{\lambda}(\sigma),
\]
so the proof of this lemma ends by putting $\sigma=0$.
\end{proof}

We denote $\mathbb R_{\pm}:=\{\pm x:x>0\}$ and ${\rm i}\,{\mathbb R}_{\pm}:=\{{\rm i}\,x:x\in \mathbb R_{\pm}\}$.
\\

\noindent
{\it Proof of Theorem \ref{thm:EEonC}.}
For eigenvalues $\tilde{\lambda}(\theta)$ of $\tilde{H}(\theta)={\rm e}^{2\theta}H(\theta)$, we write 
\begin{equation}
\lambda(\theta)={\rm e}^{-2\theta}\tilde{\lambda}(\theta)
\label{eq:lamtil}
\end{equation}
for the corresponding eigenvalue of $H(\theta)$, $\theta \in \mathscr{S}_{\alpha}$.

We first set $\theta=\pi{\rm i}/4$.
Lemma \ref{lem:Som1}, Lemma \ref{lem:Som2} and (\ref{eq:lamtil}) imply that 
\begin{equation}
{\rm e}^{\pi{\rm i}/2}\left(\sigma_{{\rm d}}(H)\cap (\mathbb C^{+}\cup \mathbb R_-)\right)=\sigma_{{\rm d}}(\tilde{H}(\pi{\rm i}/4))\cap \bigl(\{\Re z<0\}\cup {\rm i}\mathbb R_-\bigr)
\label{eq:ilam}
\end{equation}
including their multiplicities.
We next set $\theta=-\pi{\rm i}/4$.
We likewise have
\begin{equation}
{\rm e}^{-\pi{\rm i}/2}\left(\sigma_{{\rm d}}(H)\cap (\mathbb C^{-}\cup \mathbb R_-)\right)=\sigma_{{\rm d}}(\tilde{H}(-\pi{\rm i}/4))\cap \bigl(\{\Re z<0\}\cup {\rm i}\mathbb R_+\bigr)
\label{eq:-ilam}
\end{equation}
including their multiplicities.
We write $\mathfrak{S}_{\mp}(\tilde{H}(\pm \pi{\rm i}/4))$ for the right-hand side of (\ref{eq:ilam}) and (\ref{eq:-ilam}) respectively, and apply the estimate (\ref{eq:momegainfi}) of Theorem\,\ref{thm:EEonC} to $\tilde{H}(\pm \pi{\rm i}/4))=H_0\pm {\rm i}\,V_{\pi{\rm i}/4}$.
This implies
\begin{align*}
\sum_{\lambda \in \sigma_{{\rm d}}(H)\,\cap\,({\mathbb C}^{\pm}\cup \,\mathbb R_-)}|\lambda|^{\gamma}&=\sum_{\tilde{\lambda}(\pm \pi{\rm i}/4)\in \mathfrak{S}_{\mp}(\tilde{H}(\pm \pi{\rm i}/4))}|\tilde{\lambda}(\pm \pi{\rm i}/4)|^{\gamma} \nonumber\\
&\le C_{\gamma,d}\int_{{\mathbb R}^d}|V_{\pm \pi {\rm i}/4}(x)|^{\gamma+d/2}\,{\rm d}x.\nonumber
\end{align*}
Since $m_{\tilde{\lambda}(\pi{\rm i}/4)}=m_{\tilde{\lambda}(-\pi{\rm i}/4)}=m_{\lambda}$ for any $\lambda\in \sigma_{{\rm d}}(H)\cap (\mathbb C^{\pm}\cup \mathbb R_-)$ by virtue of Lemma \ref{lem:Som2}, this completes the proof.
\qed

\vspace{4mm}

{\small 
\begin{center}
{\bf {\sc Acknowledgement}}
\end{center}
The author heartily thank professor Kenji Yajima for advice and instruction.
Moreover, the author would like to thank the referees of ROMP for precious advice and Dr. Ryszard Mruga\l a for guidance respectively.
}
\vspace{4mm}

{\small 

}

\begin{thebibliography}{99}
\bibitem{AAD}A. A. Abramov, A. Aslanyan and E. B. Davies, {\it Bounds on Complex Eigenvalues and Resonances}, Journal of Physics A ${\bm 3}{\bm 4}$ (2001), No.1, 57-72
\bibitem{AC}J. Aguilar and J. M. Combes, {\it A Class of Analytic Perturbations for One-body Schr\"{o}dinger Hamiltonians}, Commu. Math. Phys. 22 (1971), 269-279
\bibitem{AL}M. Aizenman and E. H. Lieb, {\it On semi-classical bounds for eigenvalues of Schr\"{o}dinger operators}, Physics Letters Vol.66 A, No.6 (1978), 427-429
\bibitem{CLT}J. - C. Cuenin, A. Laptev and C. Tretter, {\it Eigenvalue Estimates for Non-Self-adjoint Dirac Operators on the Real Line}, Ann. Henri Poincar\'{e} 15 (2014), 707-736
\bibitem{CFKS}H. L. Cycon, R. G. Froese, W. Kirsch and B. Simon, {\it Schr\"{o}dinger Operators with Applications to Quantum Mechanics and Global Geometry}, Springer (2008)
\bibitem{DHK1}M. Demuth, M. Hansmann and G. Katriel, {\it Lieb-Thirring Type Inequalities for Sch\"{o}dinger Operators with a Complex-Valued Potential}, Integral Equations and Operator Theory 75 (2013), 1-5
\bibitem{DHK2}M. Demuth, M. Hansmann and G. Katriel, {\it Eigenvalues of Non-self-adjoint Operators: A Comparison of Two Approaches}, Operator Theory: Advances and Applications, Vol.232 (2013), 107-163
\bibitem{DHK3}M. Demuth, M. Hansmann and G. Katriel, {\it On the Discrete Spectrum of Non-self-adjoint Operators}, Journal of Functional Analysis 257 (2009), 2742-2759
\bibitem{DLL}J. Dolbeault, A. Laptev and M. Loss, {\it Lieb-Thirring inequalities with improved constants}, J. Eur. Math. Soc. 10 (2008), 1121-1126
\bibitem{F}R. L. Frank, {\it Eigenvalue Bounds for Schr\"{o}dinger Operators with Complex Potentials}, Bull. London Math. Soc. 43 (2011), 745-750
\bibitem{FHJN}R. L. Frank, D. Hundertmark, M. Jex and P. T. Nam, {\it The Lieb-Thirring inequality revisited}, arXiv:1808.09017v1 [math-ph], 27 Aug. 2018
\bibitem{FLLS}R. L. Frank, A. Laptev, E. H. Lieb and R. Seiringer, {\it Lieb-Thirring Inequalities for Schr\"{o}dinger Operators with Complex-valued Potentials}, Letters in Mathematical Physics 77 (2006), 309-316
\bibitem{FLS}R. L. Frank, A. Laptev and O. Safronov, {\it On the number of eigenvalues of Schr\"{o}dinger operators with complex potentials}, Journal of the London Mathematical Society, Volume 94, Issue 2 (2016), 377-390
\bibitem{FS}R. L. Frank and J. Sabin, {\it Restriction Theorems for Orthonormal Functions, Strichartz Inequalities, and Uniform Sobolev Estimates}, arXiv:1404.2817v3 [math-ph], 27 May 2014
\bibitem{HR}B. Helffer and D. Robert, {\it Riesz means of bounded states and semi-classical limit connected with a Lieb-Thirring conjecture II}, Ann. Inst. H. Poincar\'{e} Phys. Th\'{e}or. 53, No.2 (1990), 139-147
\bibitem{K}T. Kato, {\it Perturbation Theory for Linear Operators}, Springer (1976)
\bibitem{LaS}A. Laptev and O. Safronov, {\it Eigenvalue Estimates for Schr\"{o}dinger Operators with Complex Potentials}, Commun. Math. Phys. 292 (2009), 29-54
\bibitem{LW1}A. Laptev and T. Weidl, {\it Recent Results on Lieb-Thirring inequalities}, Journ\'{e}es "\'{E}quations aux D\'{e}riv\'{e}es Partielles"(La Chapelle sur Erdre, 2000), Exp. No.XX,14, Univ. Nantes, Nantes, 2000
\bibitem{LW2}A. Laptev and T. Weidl, {\it Sharp Lieb-Thirring inequalities in high dimensions}, Acta Math., 184 (2000), 87-111
\bibitem{L}E. H. Lieb, {\it Lieb-Thirring Inequalities},  arXiv:math-ph/0003039v1, 27 Mar. 2000
\bibitem{LL}E. H. Lieb and M. Loss, {\it Analysis (Second Edition)}, American Mathematical Society (2010)
\bibitem{LS}E. H. Lieb and R. Seiringer, {\it The Stability of Matter in Quantum Mechanics}, Cambridge University Press (2010)
\bibitem{LT}E. H. Lieb and W. Thirring, {\it Inequalities for the Moments of the Eigenvalues of the Schr\"{o}dinger Hamiltonian and Their Relation to Sobolev inequalities}, With W. Thirring in Studies in Mathematical Physics, E. Lieb, B. Simon, A. Wightman eds. {\it Inequalities}, Springer-Verlag Berlin Heidelberg (2002)
\bibitem{RS1}M. Reed and B. Simon, {\it Methods of Modern Mathematical Physics I: Functional Analysis}, Academic Press (1980)
\bibitem{RS4}M. Reed and B. Simon, {\it Methods of Modern Mathematical Physics IV: Analysis of Operators}, Academic Press (1978)
\bibitem{Sa}O. Safronov, {\it Estimates for eigenvalues of the Schr\"{o}dinger operator with a complex potential}, Bull. London Math. Soc. 42 (2010), 452-456
\bibitem{Se}P. $\check{{\rm S}}$eba, {\it The Complex Scaling Method for Dirac Resonances}, Letters in Mathematical Physics ${\bm 1}{\bm 6}$ (1988), 51-59
\bibitem{W}X. P. Wang, {\it Number of Eigenvalues for A Class of Non-self-adjoint Schr\"{o}dinger Operators}, J. Math. Pure Appl., 96(5) (2011), 409-422
\bibitem{Y}K. Yajima, {\it Schr\"{o}dinger Houteishiki I}, Asakura-suugaku-taikei 5, Asakura-shoten (2014) [Japanese]
\end{thebibliography}
\end{document}